\date{\today}
\date{\it Dedicated to Prof. Jim Douglas, Jr. on the
  occasion of his eighty fifth birthday.}
\title{A class of nonparametric DSSY nonconforming quadrilateral elements
}\author{Youngmok Jeon\thanks{Department of Mathematics,
         Ajou University,
         Suwon 443--749,
         Korea;
E-mail: yjeon@ajou.ac.kr
} ~~~Hyun Nam\thanks{Department of Mathematics,
         Seoul National University,
         Seoul 151--747,
         Korea;
E-mail: lamyun96@snu.ac.kr
}~~~ Dongwoo Sheen\thanks{Department of Mathematics, and
Interdisciplinary Program in Computational Science \& Technology,
         Seoul National University,
         Seoul 151--747,
         Korea
E-mail: sheen@snu.ac.kr
} ~~~Kwangshin Shim\thanks{Department of Mathematics,
         Seoul National University,
         Seoul 151--747,
         Korea
E-mail: sim4322@snu.ac.kr;\newline \newline
This paper will appear in  {\bf ESAIM--Math. Model. Numer. Anal.}
}}
\numberwithin{equation}{section}
\newtheorem{theorem}{Theorem}[section]
\newtheorem{remark}[theorem]{Remark}
\newcommand{\rmkref}[1]{Remark~\ref{#1}}
\newcommand{\figref}[1]{Figure~\ref{#1}}
\newcommand{\tabref}[1]{Table~\ref{#1}}
\newcommand{\tabrefs}[2]{Tables~\ref{#1} and ~\ref{#2}}
\newcommand{\tabrefss}[2]{Tables~\ref{#1} -- ~\ref{#2}}
\newcommand{\eq}[1]{\begin{eqnarray}\label{#1}}
\newcommand{\qe}{\end{eqnarray}}
\newcommand{\be}{\begin{eqnarray}}
\newcommand{\ee}{\end{eqnarray}}
\newcommand{\bal}{\begin{aligned}}
\newcommand{\eal}{\end{aligned}}
\newcommand{\bes}{\begin{eqnarray*}}
\newcommand{\ees}{\end{eqnarray*}}
\newcommand{\bs}{\begin{subeqnarray}}
\newcommand{\es}{\end{subeqnarray}}
\newcommand{\bss}{\begin{subeqnarray*}}
\newcommand{\ess}{\end{subeqnarray*}}
\newcounter{saveeqn}
\renewcommand{\hat}{\widehat}
\renewcommand{\tilde}{\widetilde}
\def\Span{\operatorname{Span}}
\def\hK{\widehat K}
\def\til{\widetilde}
\def\ts{\widetilde s}
\def\diam{\operatorname{diam}}
\def\sq{\mathscr{Q}}
\def\O{\Omega}
\def\cA{\mathcal{A}}
\def\cS{\mathcal{S}}
\def\cE{\mathcal{E}}
\def\cF{\mathcal{F}}
\def\Tau{\mathcal{T}}
\def\cF{\mathcal{F}}
\def\NC{\mathcal{NC}}
\def\DSSY{\mathcal{NC}^{DSSY}}
\def\and{\quad\text{and}\quad}
\def\<{\left\langle}
\def\>{\right\rangle}
\def\bc{{\boldsymbol \xi}_0}
\def\bv{\mathbf v}
\def\bx{\mathbf x}
\def\bb{\mathbf b}
\def\bd{\mathbf d}
\def\bx{\mathbf{x}}
\def\hx{\hat{x}_1}
\def\hy{\hat{x}_2}
\def\hbm{\hat{\mathbf{m}}}
\def\hbx{\hat{\mathbf{x}}}
\def\hbu{\hat{\mathbf{u}}}
\def\hbv{\hat{\mathbf{v}}}
\def\tx{\tilde{x}_1}
\def\ty{\tilde{x}_2}
\def\tbg{\tilde{\mathbf{g}}}
\def\bc{\mathbf{c}}
\def\tbeta{\tilde{\boldsymbol{\eta}}}
\def\tbe{\tilde{\mathbf{e}}}
\def\tbm{\tilde{\mathbf{m}}}
\def\tbv{\tilde{\mathbf{v}}}
\def\tbx{\tilde{\mathbf{x}}}
\def\tbs{\tilde{\mathbf{s}}}
\def\bv{\mathbf{ v}}
\def\cor#1{#1}
\def\addcor#1{#1}
\newcolumntype{x}[1]{>{\centering\hspace{0pt}}p{#1}} 
\begin{document}
\maketitle
\begin{abstract}
A new class of nonparametric nonconforming quadrilateral finite elements is introduced
which has the midpoint continuity and the mean value continuity at the
interfaces of elements simultaneously as the rectangular DSSY element \cite{dssy-nc-ell}.
The parametric DSSY element for general quadrilaterals requires five degrees
of freedom to have an optimal order of convergence \cite{cdssy},
while the new nonparametric DSSY elements require only four degrees of freedom.
The design of new elements is based on
the decomposition of a bilinear transform into a simple bilinear map followed
by a suitable affine map.
Numerical results are presented to compare the new elements with the parametric DSSY element.
\end{abstract}


\section{Introduction}
There have been many progresses for nonconforming finite
element methods for many mechanical problems for last decades.
Nonconforming elements have been a favorite choice in solving the Stokes and Navier-Stokes
equations \cite{cr73, cdy99, cheapest-nc-rect, rann, turek} in a stable manner. Also,
the nonconforming nature
facilitates resolving numerical locking \cite{brenner-sung,
lls-nc-elast, zhiminzhang} in elasticity problems \cor{with the clamped
  boundary condition.
For pure traction boundary value problems in elasticity, there have been
a couple of
approaches to avoid numerical locking by employing conforming and
nonconforming elements componentwise \cite{kouhia-stenberg, ming-shi-reissner,
ming-shi-two}.
} \cor{Although there are several higher-order nonconforming elements,}
the lowest order nonconforming elements have been especially popular numerical methods
because of its simplicity and stability property
\cite{cr73, rann, turek}.
\cor{In particular, the linear simplicial nonconforming elements introduced by Crouzeix
  and Raviart \cite{cr73} have been most widely used.
Since the degrees of freedom for quadrilateral or rectangular elements are usually smaller
than those for triangular elements, it is desirable to use
quadrilateral or rectangular elements wherever they can be applied.}

We briefly review some progresses for nonconforming rectangular or quadrilateral
elements.
Han introduced firstly a rectangular element which assumes five local degrees
of freedom (DOFs) \cite{han84} in 1984.
Then in 1992 Rannacher and Turek introduced the rotated $Q_1$ nonconforming elements with
two types of degrees of freedom \cite{rann}: the four edge-midpoint
value DOFs and the four edge integral DOFs.
Chen \cite{chen-projection-93} also used the first type of DOFs for the same
rotated $Q_1$ element.
Douglas, Santos, Sheen and Ye introduced a new nonconforming finite element,
which we call the {\it DSSY element} in this paper,
for which the two types of degrees of freedom are coincident on rectangular
(or parallelogram) meshes \cite{dssy-nc-ell}. \cor{One of the key features of
this DSSY element is that it fulfills the {\it mean value property}
on each edge. For a convergence analysis, the average continuity
property over each edge implies the pass of ``patch test'', which is a
sufficient condition for optimal convergence of nonconforming finite element
methods \cite{shi-stummel-patch, shi-fem-patch, wang-patch}.
Notice that using the edge-midpoint values is not only cheaper but also simpler
than using edge-integral values in constructing the local and global basis
functions.
For instance, in gluing two neighboring elements across an edge, only one
evaluation
at the edge midpoint is necessary for the DSSY-type element while at least two
Gauss-point evaluations are necessary for the elements using integral type DOFs.
Therefore, nonconforming elements fulfilling the mean value property have advantages
in implementation. The Crouzeix-Raviart $P_1$-nonconforming elements
\cite{cr73} enjoy the mean value property.}

Arnold, Boffi, and Falk provided a theory of convergence order in
 quadrilateral meshes \cite{ABF}. A  modified DSSY element
 was introduced in \cite{cdssy}, which requires an additional
 DOF in order to retain an optimal convergence order for genuinely
 quadrilateral meshes. It seems impossible to reduce the number of DOFs from
five to four
as long as one considers a parametric DSSY-type element on
quadrilateral meshes and still wants to preserve optimal convergence.

The aim of this paper is to attempt to extend the spirit of rectangular DSSY
element to genuinely quadrilateral meshes keeping the mean value property with
four DOFs, shifting from the parametric realm to the nonparametric one.
Our starting point is based on a clever decomposition of a bilinear map
into a simple bilinear map followed by an affine map \cite{koster2012new,parksheen-p1quad, park-sheen-morley}.
This approach induces an
intermediate reference quadrilateral, where a four DOF DSSY-type element can
be defined.
Then the affine map will preserve $P_1$ and the mean value property on each edge.
We remark that the quadrilateral element introduced in \cite{parksheen-p1quad}
is of only three DOFs, and a similar element was introduced by Hu and Shi
\cite{hu-shi}, but without any modification they cannot be used to solve fluid
and solid mechanics in a stable manner.
%

The paper is organized as follows.
In section 2 we review some specific properties of the DSSY element.
Then using the decomposition of a bilinear map into a simple
bilinear map followed by an affine map, we introduce a family of
quadrilateral elements on an intermediate reference quadrilateral, which
is of four DOFs. Based on this, we define a family of nonparametric
quadrilateral elements.
Section 3 is devoted to numerical experiments. The performance of
the new nonparametric DSSY elements and the parametric DSSY element is
compared in terms of computation time where the nonparametric DSSY elements
show a clear advantage over the parametric one.

\section{Quadrilateral nonconforming elements}
In this section we will introduce a nonparametric DSSY element of four local
degrees of freedom. First of all let us review the (parametric) DSSY element
in brief.
\subsection{The DSSY element}
Let $\Omega$ be a simply connected polygonal domain in $\mathbb{R}^2$ and
$(\Tau_h )_{h>0} $ be a family of shape regular quadrilateral
triangulations of $\Omega$ with $\max_{K \in \Tau_h}
\diam(K)=h$. Let us denote by $\cE_h$  the set of all edges in
$\Tau_h$. For an element $K \in \Tau_h $ we denote four vertices
of $K$ by $\mathbf{v}_j$ for $j=1,2,3,4.$ Also denote the edge passing
through $\mathbf{v}_{j-1}$ and  $\mathbf{v}_j$ by $e_j$ and the midpoint of
$e_j$ by $\mathbf{m}_j$ for  $j=1,2,3,4,$ (assuming $\mathbf{v}_{0} := \mathbf{v}_{4}$,)
as in \figref{fig:Bil}.
The linear polynomials $l_{13}$ and $l_{24}$ are defined in a way that two line equations $l_{13}=0$, $l_{24}=0$ pass through $\mathbf{m}_1$, $\mathbf{m}_3$, and $\mathbf{m}_2$, $\mathbf{m}_4$, respectively.
 Consider a reference square $\hat{K} = [-1,1]^2 $. We use the similar notations for vertices, edges, midpoints of $\hat{K}$ as those of $K$ such as $\hat{\mathbf{v}}_{j}$, $\hat{e}_j$, and $\hat{\mathbf{m}}_j$ for $j=1,2,3,4$.

Let $K\in \Tau_h $ be any quadrilateral. Then
there exists a bilinear map $\cF_K: \hK \rightarrow K$ such
that $\cF_K(\hK) = K.$
Notice that $\cF_K$ can be written as follows:
\begin{equation}\label{eq:bilinear}
\cF_K(\hbx)=\mathbf{v}_1 +
\frac{1-\hx}{2}(\mathbf{v}_2-\mathbf{v}_1) +\frac{1-\hy}{2}
(\mathbf{v}_4-\mathbf{v}_1) + \frac{(1-\hx)(1-\hy)}{4} (\mathbf{v}_1 -\mathbf{v}_2 + \mathbf{v}_3 -\mathbf{v}_4 ).
\end{equation}

Set
\begin{equation*}
\DSSY_{\hat{K},l} =\{ 1, \hx, \hy, \hat{\varphi}_l(\hx) -\hat{\varphi}_l(\hy) \}, \quad l=1,2,
\end{equation*}
where
\begin{equation} \label{DSSY-psi}
\hat{\varphi}_l(t)= \left\{
\begin{array}{ll}
t^2 -\frac{5}{3}t^4, & l=1, \\
\\
t^2 -\frac{25}{6}t^4 + \frac{7}{2}t^6,& l=2.
\end{array} \right.
\end{equation}

Then the degrees of freedom for the DSSY element can be chosen as either
four mean values over edges or four edge-midpoint values, which turn out to be
identical. In other words, the DSSY elements fulfill the {\it mean value property}:
\begin{equation}\label{eq:mvp}
\frac{1}{|\hat{e}_j|} \int_{\hat{e}_j} \hat{v}~ d\hat{\sigma}
=\hat{v}(\hat{\mathbf{m}}_j), ~~~~~   j=1,2,3,4,\quad\forall \hat{v}\in \DSSY_{\hat{K},l}.
\end{equation}

In order to retain an optimal convergence order for any quadrilateral mesh,
the parametric DSSY element needs an additional element $\hx\hy$, and
therefore the modified reference element reads
\begin{equation*}
{\DSSY_{\hat{K},l}}^* =\{ 1, \hx, \hy, \hx\hy, \hat{\varphi}_l(\hx) -\hat{\varphi}_l(\hy) \}, \quad l=1,2,
\end{equation*}
with an additional degree of freedom
\begin{equation*}
\int_{\hat{K}} \hat{v}(\hbx)\hx\hy~\operatorname{d\hx d\hy}.
\end{equation*}

The DSSY element on $K$ is then defined by
\begin{equation*}
\DSSY_{K}=\begin{cases}
\{ v ~|~v=\hat{v} \circ \cF_K^{-1}, \hat{v} \in {\DSSY_{\hat{K},l}}^*  \}
&\quad\text{if } K \text{ is a true quadrilateral},\\
\{ v ~|~v=\hat{v} \circ \cF_K^{-1}, \hat{v} \in \DSSY_{\hat{K},l}  \}
&\quad\text{if }K \text{ is a rectangle},
\end{cases}
\end{equation*}
where  $\cF_K$ is defined by \eqref{eq:bilinear}.
The global parametric DSSY element is defined by
\begin{equation*}
\begin{split}
&\NC^{p}_{h}=\{ v_h \in L^2(\O)  ~|~ v_h|_K \in \DSSY_K ~\mathrm{for}~ K \in \Tau_h, v_h ~\mathrm{is}~ \mathrm{continuous}~ \mathrm{at}~ \mathrm{the}~ \mathrm{midpoint}~ \mathrm{of}~ \mathrm{each}~e \in \cE_h    \},   \\
&\NC^{p}_{h,0}=\{v_h \in \NC^{p}_{h} ~|~v_h~\mathrm{is}~ \mathrm{zero}~ \mathrm{at}~ \mathrm{the}~ \mathrm{midpoint}~ \mathrm{of}~ e \in \cE_h \cap \partial \O \}.
\end{split}
\end{equation*}

\subsection{A Class of Nonparametric DSSY Elements}
We are interested in reducing the five degrees of freedom DSSY element
to four, but still retaining the {\it mean value property} \eqref{eq:mvp}.
It seems that there does not exist a four-DOF parametric quadrilateral
element which has an optimal order convergence rate and the {\it mean value
  property} simultaneously.
Here, we seek a candidate among nonparametric elements.

\subsubsection{A closer look at the DSSY element}
\addcor{For the sake of simplicity of our argument regrading  the geometrical property of a basis function,
we shall focus on}, $\hat{\varphi}_1(\hx) -\hat{\varphi}_1(\hy)$.

Let us denote $\hat{\varphi}_1(\hx) -\hat{\varphi}_1(\hy)$ by $\hat{\psi}(\hbx)$  for convenience.
In the reference domain $\hat{K}$, the function $\hat{\psi}(\hbx)$
can be factorized as
\begin{equation}\label{eq:psi-parametric}
\hat{\psi}(\hbx)=-\frac{5}{3}(\hx-\hy)(\hx+\hy)(\hx^2+\hy^2-\frac{3}{5} ),
\end{equation}
from which one can realize that $\hat{\psi}(\hbx)$ is the
product of three polynomials whose zero-level sets consist of
the two diagonals of $\hat{K}$ and one circle
$\hx^2+\hy^2-\frac{3}{5}=0$ in $\hat{K}.$
At this point, a natural question is
whether for any quadrilateral $K$ we may find a function satisfying the {\it
  mean value properties} by using the similar geometrical idea as
$\hat{\psi}(\hbx)$.

Among the parametric nonconforming elements in \cite{dssy-nc-ell}, $\psi(\bx)=
\hat{\psi} \circ \cF_K^{-1}$ is not a quartic polynomial in general
if $K$ is a genuine quadrilateral, that is, if $\cF_K$ is not an affine map.
In most cases it is a non-polynomial function. Thus $\psi(\bx)$ would not be
similarly regarded as the product of zero level set functions of
three geometrical objects, such as two lines and a circle. This seems to be
one of the limits of using parametric elements. We will thus divert \addcor{our
attention} from using the parametric elements and investigate \addcor{a possible way of finding} a suitable four
degrees of freedom element.

\subsubsection{Intermediate Spaces}
To design such a suitable element, we first decompose the bilinear map
$\cF_K$ given by \eqref{eq:bilinear} into a composition of a {\it simple bilinear map} followed by
an affine map \cite{koster2012new,parksheen-p1quad, park-sheen-morley}. A bilinear map $S: \mathbb R^2\to \mathbb
R^2$ is said to be a {\it simple bilinear map} if there exists a vector
$\tilde{\mathbf s}$
such that
$S {x_1\choose x_2} = {x_1\choose x_2} +
x_1x_2 \tilde{\mathbf s}$ for all $ {x_1\choose x_2} \in \mathbb R^2.$

Observe that $\cF_K$ can be written as follows:
\begin{eqnarray}\label{eq:cFK}
\cF_K(\hbx) = A \hbx + \hx \hy \bd +\bb
 = A \left[ \hbx + \hx \hy A^{-1} \bd \right] +\bb
 = A \left[ \hbx + \hx \hy \tbs \right] +\bb,
\end{eqnarray}
where $A$ is a $2\times 2$ matrix and $\bb, \bd$, and $\tbs$ are
two-dimensional vectors given by
\begin{eqnarray*}
A &=& \frac14 \left(\bv_1 -\bv_2 -\bv_3 + \bv_4 ,\bv_1 + \bv_2 - \bv_3 - \bv_4
\right),\\
\bd &=& \frac{\bv_1 -\bv_2 +\bv_3 - \bv_4}{4},\quad \bb =
\frac{\bv_1 +\bv_2 +\bv_3 + \bv_4}{4},\quad \tbs = A^{-1}\bd.
\end{eqnarray*}
Notice that \eqref{eq:cFK} can be understood as the following decomposition of
an affine map and a {\it simple bilinear map} associated with $\tbs$:
$$\cF_K = \cA_K \circ \cS_K,$$
where
$\cA_K:\til{K} \rightarrow K$ and
$\cS_K:\hat{K} \rightarrow \til{K}$ are given by
\begin{equation*}
\cA_K(\tbx) = A \tbx + \bb, \quad
\cS_K(\hbx)=\hbx + \hx\hy\tbs.
\end{equation*}
Here, $\til{K}=\cS_K(\hK)$ is a quadrilateral with four vertices
\begin{equation*}\label{eq:tbv}
\tbv_1=\hbv_1+\tbs,~~\tbv_2 =\hbv_2-\tbs,~~\tbv_3=\hbv_3+\tbs,~~\tbv_4=\hbv_4-
\tbs.
\end{equation*}
%
It should be stressed that the midpoints of $\hat{K}$ are invariant under the map $\cS_K$
and that $\til{K}$ is a perturbation of $\hat{K}$ by a single vector $\tbs$
such that opposite vertices are moved in the same direction (see \figref{fig:Bil}).

The relations of three mappings $\cA_K, \cS_K, \cF_K$ and three
domains
$\hat{K}, \til{K}, K$ can be interpreted
as follows. For given quadrilateral $K\in \Tau_h $ and the reference cube
$\hat{K}$,  $\cF_K$ is a unique bilinear map such that
$\cF_K(\hbv_{j})=\bv_{j} $ for $j=1,2,3,4$. It is easy to see
that there exists a unique simple bilinear map $\cS_K$ and $\til{K}$ such that
$\til{K} = \cS_K(\hat{K})$ and $K = \cA_K(\til{K}).$
%
The intermediate reference domain $\til{K}$ is very useful when we construct
a certain type of basis functions that have specific features in $K$
since $\til{K}$ is connected to the physical domain $K$
by an affine map not by a bilinear map. Adapted to this spirit,
we will construct basis functions in $\til{K}$ instead of $\hat{K}$.

\begin{remark}\label{rem:quad-convex}
Notice that $\til{K}$ is convex if and only if
\begin{eqnarray}\label{quad:convex}
|\ts_1| +|\ts_2| \le 1,
\end{eqnarray}
where the equality holds if and only if $\til{K}$ degenerates to a triangle \cite{park-sheen-morley}.
\end{remark}

\begin{figure}
 \centering
\includegraphics[width=0.60\textwidth]{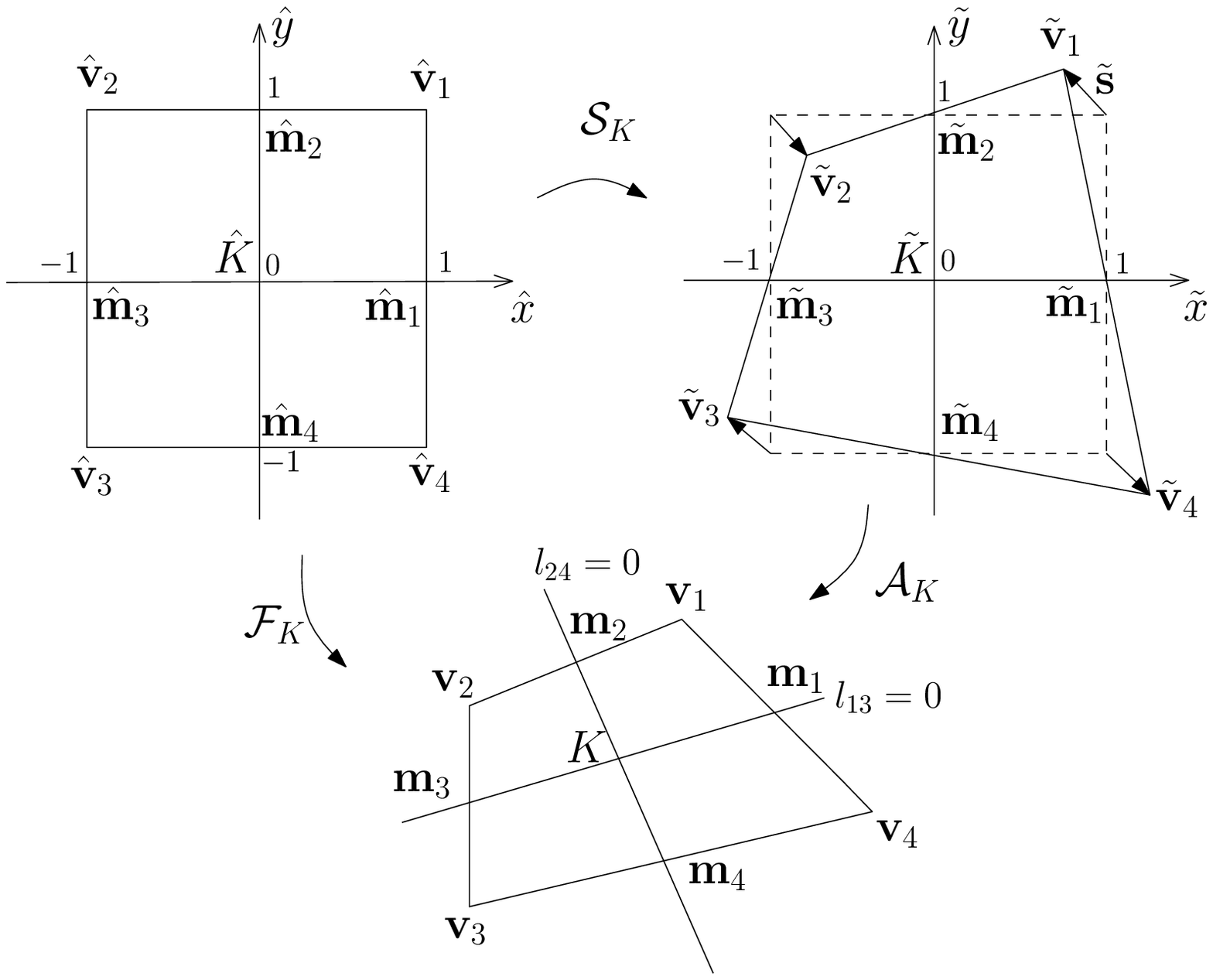}
\caption{A bilinear map $\cF_K$ from $\hat{K}$ to $K$,  a bilinear map $\cS_K$ from $\hat{K}$ to $\til{K}$, and an affine map $\cA_K$ from $\til{K}$ to $K$.}
\label{fig:Bil}
\end{figure}

Our strategy is to use the intermediate reference domain $\til{K}$,
where the ansatz is to set a quartic polynomial similarly to
\eqref{eq:psi-parametric} as follows:
\begin{equation}\label{eq:theta}
\til{\mu}(\tbx)=-\frac{5}{3}\til{\ell}_1(\tbx)\til{\ell}_2(\tbx)\til{\sq}(\tbx),
\end{equation}
where $\til{\ell}_j(\tbx),j=1,2,$ are linear polynomials and
$\til{\sq}(\tbx)$ a quadratic polynomial. We seek a quartic
polynomial $\til{\mu}(\tbx)$ fulfilling the
{\it mean value property} \eqref{eq:mvp} in $\til{K}$.
Naturally, set $\til{\ell}_1(\tbx)$ and $\til{\ell}_2(\tbx)$
to be linear polynomials such that
$\til{\ell}_1(\tbx)=0$ and $\til{\ell}_2(\tbx)=0$
are the equations of lines passing through
$\tbv_1$, $\tbv_3$, and $\tbv_2$, $\tbv_4 $, respectively. Then they are given
(up to multiplicative constants) by
\begin{subeqnarray}\label{eq:cir}
&&\til{\ell}_1(\tbx)=\tx - \ty + \ts_2 - \ts_1,\\
&&\til{\ell}_2(\tbx)=\tx + \ty + \ts_1 + \ts_2.
\end{subeqnarray}

Recall the Gauss quadrature formula:
\begin{eqnarray*}
\int_{-1}^1 f(t)~\operatorname{d}t \approx \frac89 f(0) + \frac59 (f(\xi) +
f(-\xi)),\quad  \xi =\sqrt{\frac35},
\end{eqnarray*}
which is exact for quartic polynomials.
An application of this formula
simplifies the {\it mean value property} \eqref{eq:mvp} into the form
\begin{eqnarray}\label{eq:quadrature-2}
 \til{\mu}(\tbg_{2j-1}) + \til{\mu}(\tbg_{2j}) -2\til{\mu}(\hbm_j) = 0, \quad j = 1, \cdots, 4,
\end{eqnarray}
where
\begin{eqnarray*}
&\tbg_1 = \hbm_1 -\xi(\hbu_2  + \tbs),\quad
\tbg_2 = \hbm_1 +\xi(\hbu_2  + \tbs),\\
&\tbg_3 = \hbm_2 +\xi(\hbu_1  + \tbs),\quad
\tbg_4 = \hbm_2 -\xi(\hbu_1  + \tbs),\\
&\tbg_5 = \hbm_3 +\xi(\hbu_2  - \tbs),\quad
\tbg_6 = \hbm_3 -\xi(\hbu_2  - \tbs),\\
&\tbg_7 = \hbm_4 -\xi(\hbu_1  - \tbs),\quad
\tbg_8 = \hbm_4 +\xi(\hbu_1  - \tbs),
\end{eqnarray*} together with $\hbm_j,j = 1,\cdots,4,$
are the twelve Gauss points on the edges. Here, and in what follows,
we adopt the notations for the standard unit vectors:
$\hbu_1 = \begin{pmatrix} 1 \\ 0 \end{pmatrix}$ and
$\hbu_2 = \begin{pmatrix} 0 \\ 1 \end{pmatrix}.$
Notice that the equations of lines for edges $\tbe_j,j = 1,\cdots,4,$ are given
in vector notation as follows:
\[
\tbe_1(t) = \hbm_1 + t(\hbu_2 + \tbs),\quad
\tbe_2(t) = \hbm_2 + t(\hbu_1 + \tbs),\quad
\tbe_3(t) = \hbm_3 + t(\hbu_2 - \tbs),\quad
\tbe_4(t) = \hbm_4 + t(\hbu_1 - \tbs),
\] for $t\in [-1,1].$
Consider the quartic polynomial \eqref{eq:theta} restricted to an edge
$\tbe_j(t), t\in[-1,1].$ Since $\til{\ell}_1\til{\ell}_2$ is the product of
two linear polynomials which vanishes at the other two end points of each edge,
one sees that
\begin{eqnarray}\label{eq:inter-quad}
\til{\ell}_1(\tbg_{2j-1})\til{\ell}_2(\tbg_{2j-1}) =
\til{\ell}_1(\tbg_{2j})\til{\ell}_2(\tbg_{2j}) = (1-\xi^2)
\til{\ell}_1(\hbm_{j})\til{\ell}_2(\hbm_{j}), \quad \left(\xi = \sqrt{\frac35}\right).
\end{eqnarray}

A combination of \eqref{eq:quadrature-2} and \eqref{eq:inter-quad} yields that
\eqref{eq:mvp} holds if and only if the quadratic polynomial $\til{\sq}$
satisfies
\begin{eqnarray}\label{eq:inter-quad-2}
 \til{\sq}(\tbg_{2j-1}) + \til{\sq}(\tbg_{2j}) -5\til{\sq}(\hbm_j) = 0, \quad j = 1, \cdots, 4.
\end{eqnarray}

A standard use of symbolic calculation gives the general solution of
\eqref{eq:inter-quad-2} in the following form
\begin{eqnarray}\label{eq:sq}
\til{\sq}(\tbx) = \left(\tx + \frac{2}{5} \ts_2\right)^2 +
\left(\ty + \frac{2}{5} \ts_1\right)^2 - \til{r}^2 +
\til{c} \left[(\tx+\frac{2}{5} \ts_2)(\ty+\frac{2}{5} \ts_1) +\frac{6}{25} \ts_1\ts_2 \right],
\end{eqnarray}
with $\til{r} = \frac{\sqrt{6}}{5}\sqrt{ \frac{5}{2} - \cor{\tilde{s}_1^2 - \tilde{s}_2^2}}$
for arbitrary constant $\til{c}\in \mathbb R.$ Here, we assume that the
coefficient of $\tx$ is normalized. Notice that $\til{r}$ \addcor{takes a positive real value}
if $\til{K}$ is convex due to \rmkref{rem:quad-convex}.

Define, for each $\til{c}\in\mathbb R$,
\[
\til{\mu}(\tx,\ty;\til{c}) = -\frac{5}{3}\til{\ell}_1(\tx,\ty)\til{\ell}_2(\tx,\ty)\til{\sq}(\tx,\ty),
\]
where
$\til{\ell}_1$ and $\til{\ell}_2$ are defined by \eqref{eq:cir} and $\til{\sq}$ by
\eqref{eq:sq} depending on $\til{c}$ as well as $\tbs.$

We are now in a position to define a class of {\it nonparametric nonconforming
elements on the intermediate quadrilaterals $\til{K}$} with four degrees of freedom as follows.
\begin{enumerate}
\item $\til{K} = \cS_K(\hat K);$
\item $\til{P}_{\til{K}}(\til{c}) = \Span\{1, \tx, \ty, \til{\mu}(\tx,\ty;\til{c}) \}$;
\item $\til{\Sigma}_{\til{K}} = \{\text{four edge-midpoint values of }
  {\til{K}} \} = \{ \text{four mean values over edges of }
  {\til{K}}            \}$.
\end{enumerate}

By the above construction it is apparent that for any element
$\til{p}\in \til{P}_{\til{K}}(\til{c})$ the {\it mean value property} holds:
\begin{equation*}
\frac{1}{|\til{e}_j|} \int_{\til{e}_j} \til{p}~ d\til{\sigma} =\til{p}(\tbm_j), ~~~~~   j=1,2,3,4.
\end{equation*}
Moreover, the above class of intermediate nonparametric elements is
unisolvent for most of $\til{c}.$
\begin{theorem} \label{thm:thm1}
Assume that $\til{c}$ is chosen such that
$\ts_1^2 + \ts_2^2+\frac{1}{3} + \til{c}~\ts_1\ts_2 \neq 0.$
Then the intermediate nonparametric element
$\left(\til{K},\til{P}_{\til{K}}(\til{c}),\til{\Sigma}_{\til{K}}\right)$ is unisolvent.
\end{theorem}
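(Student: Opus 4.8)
The plan is to reduce unisolvence to the nonvanishing of a single $4\times4$ determinant. Since $\dim \til P_{\til K}(\til c) = 4 = \#\til\Sigma_{\til K}$, it suffices to show that the linear map $\til p \mapsto (\til p(\tbm_1),\dots,\til p(\tbm_4))$ is injective, equivalently that the $4\times4$ matrix whose rows are the values of the basis $\{1,\tx,\ty,\til\mu(\cdot;\til c)\}$ at the four midpoints $\tbm_1,\dots,\tbm_4$ is nonsingular. The first three functions are affine, so the top-left $3\times 3$ block involves only the midpoint coordinates; one observes that the midpoints of $\til K$ are exactly the midpoints $\hbm_j$ of $\hat K$ (they are invariant under $\cS_K$, as stressed in the text), namely $\hbm_1=(0,-1)$, $\hbm_2=(1,0)$, $\hbm_3=(0,1)$, $\hbm_4=(-1,0)$. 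Expanding the determinant along the last column, it equals $\sum_{j=1}^4 (\pm)\,\til\mu(\hbm_j;\til c)\cdot M_j$, where each $M_j$ is a $3\times 3$ minor built from the affine rows evaluated at the three midpoints other than $\hbm_j$.

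First I would compute the three affine minors explicitly: with the four midpoints above they are all equal up to sign to the area-type quantity coming from $\{1,\tx,\ty\}$ at three of the four points, which is a fixed nonzero constant (independent of $\tbs$ and $\til c$). Hence the determinant reduces, up to a nonzero universal factor, to an alternating sum $\til\mu(\hbm_1;\til c) - \til\mu(\hbm_2;\til c) + \til\mu(\hbm_3;\til c) - \til\mu(\hbm_4;\til c)$ (signs to be pinned down by the orientation). Next I would evaluate $\til\mu(\hbm_j;\til c) = -\tfrac53\,\til\ell_1(\hbm_j)\til\ell_2(\hbm_j)\,\til\sq(\hbm_j)$ at each midpoint using \eqref{eq:cir} and \eqref{eq:sq}. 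From \eqref{eq:cir}, $\til\ell_1(\hbm_1)\til\ell_2(\hbm_1) = (1+\ts_2-\ts_1)(\,-1+\ts_1+\ts_2) $, and similarly at the other three midpoints; by the symmetry of the configuration (midpoints come in $\pm$ pairs) these products pair up, so the alternating sum collapses to something manageable. Likewise $\til\sq(\hbm_j)$ from \eqref{eq:sq} is a concrete quadratic expression in $\ts_1,\ts_2$ and linear in $\til c$.

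Carrying out this bookkeeping, I expect the alternating sum to factor as a nonzero numerical constant times $\bigl(\ts_1^2 + \ts_2^2 + \tfrac13 + \til c\,\ts_1\ts_2\bigr)$ — precisely the hypothesis of the theorem — so that the determinant is nonzero exactly under the stated assumption, giving unisolvence. The main obstacle is purely computational: correctly tracking the orientation-dependent signs in the cofactor expansion and faithfully substituting \eqref{eq:cir}--\eqref{eq:sq} at the four midpoints so that the massive cancellation leaves exactly the factor $\ts_1^2+\ts_2^2+\tfrac13+\til c\,\ts_1\ts_2$; this is best discharged by symbolic computation, as the authors already signal they use elsewhere. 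A minor point to record is that nothing here requires $\til K$ to be convex or $\til r$ real — the identity for the determinant is polynomial in all parameters — so the theorem holds for the full admissible range of $\tbs$.
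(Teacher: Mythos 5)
Your proposal is correct and takes essentially the same route as the paper: the paper also reduces unisolvence to the nonsingularity of the $4\times 4$ matrix $a_{jk}=\til{\phi}_j(\hbm_k)$ of the basis $\{1,\tx,\ty,\til{\mu}(\cdot\,;\til{c})\}$ at the edge midpoints and reports, via symbolic calculation, $\det(A)=16\bigl(\ts_1^2+\ts_2^2+\tfrac13+\til{c}\,\ts_1\ts_2\bigr)$, which is exactly the factorization your cofactor expansion (all affine minors equal $\pm 2$) is set up to produce. Your midpoint labeling is a cyclic shift of the paper's convention, affecting only the overall sign of the determinant and hence nothing in the conclusion.
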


\begin{proof}
In order to show unisolvency of the space
$\mathrm{Span}\{1,\tx,\ty, \til{\mu}(\tx,\ty;\til{c})\}$ with respect to the degrees of
freedom $f(\hbm_j),j=1,\cdots,4,$
denote the functions $1$, $\tx$, $\ty$, and
$\til{\mu}(\tx,\ty;\til{c})$ by $\til{\phi_1}$, $\til{\phi_2}$, $\til{\phi_3}$, and
$\til{\phi_4}$, respectively and also define $A=(a_{jk}) \in
M_{4\times4}(\mathbb{R})$ by $a_{jk}=\til{\phi_j}(\hbm_k)$.
A symbolic calculation shows that
$\det(A)=16(\ts_1^2 + \ts_2^2+\frac{1}{3} + \til{c}~\ts_1\ts_2),$
from which $A$ is nonsingular for any $\tbs \in \mathbb{R}^2$
if and only if $\til{c}$ is chosen such that
$\ts_1^2 + \ts_2^2+\frac{1}{3} + \til{c}~\ts_1\ts_2 \neq 0.$
This completes the proof.
 \end{proof}

For $\til{c}=0$, the quadratic equation $\til{\sq}(\tbx)=0$
denotes the circle with center
$-\frac25 \begin{pmatrix}\ts_2\\ \ts_1 \end{pmatrix}$
and radius $\til{r}.$ In this case, \eqref{eq:sq} can be easily derived by
a geometric argument as follows. Indeed, assume that $\til{\sq}(\tbx)=0$
denotes the circle with center $\bc=\begin{pmatrix}c_1\\c_2
\end{pmatrix}$ and radius $r$ so that
$\til{\sq}(\tbx) = (\tbx-\bc)\cdot(\tbx-\bc) - r^2.$ Then
\eqref{eq:inter-quad-2} implies that
\begin{eqnarray*}
(\tbg_{2j-1}-\bc)\cdot (\tbg_{2j-1}-\bc)
 + (\tbg_{2j} - \bc)\cdot(\tbg_{2j} - \bc) - 5(\hbm_j-\bc)\cdot(\hbm_j-\bc) = -3r^2, \quad j = 1, \cdots, 4.
\end{eqnarray*}
Arrange these equations as follows:
\begin{eqnarray}\label{eq:inter-quad-3}
(\bc -\tbeta_{2j-1})\cdot (\bc -\tbeta_{2j}) = r^2,\quad j = 1,\cdots,4,
\end{eqnarray}
where the points $\tbeta_{2j-1}$ and $\tbeta_{2j}$ are given between
$\tbg_{2j-1}$ and $\hbm_j$, and $\tbg_{2j}$ and $\hbm_j$, respectively,
explicitly defined as follows: with $\eta=\sqrt{\frac25},$
\begin{eqnarray*}
&\tbeta_1 = \hbm_1 -\eta(\hbu_2  + \tbs),\quad
\tbeta_2 = \hbm_1 +\eta(\hbu_2  + \tbs),\\
&\tbeta_3 = \hbm_2 +\eta(\hbu_1  + \tbs),\quad
\tbeta_4 = \hbm_2 -\eta(\hbu_1  + \tbs),\\
&\tbeta_5 = \hbm_3 +\eta(\hbu_2  - \tbs),\quad
\tbeta_6 = \hbm_3 -\eta(\hbu_2  - \tbs),\\
&\tbeta_7 = \hbm_4 -\eta(\hbu_1  - \tbs),\quad
\tbeta_8 = \hbm_4 +\eta(\hbu_1  - \tbs).
\end{eqnarray*}

Geometrically, \eqref{eq:inter-quad-3} is equivalent to saying that the
location of $\bc$ is such that the four inner products of the vectors
$\bc-\tbeta_{2j-1}$ and $\bc-\tbeta_{2j},$ for $j=1,\cdots, 4,$ are equal.
It is straightforward from the equations
\eqref{eq:inter-quad-3} for  $j=1$ and $j=3$ to see that $c_1 = -\eta^2 \cor{\tilde{s}_2}$,
and similarly from those for $j=2$ and $j=4$ to see that $c_2 = -\eta^2 \cor{\tilde{s}_1}.$
Then $r = \til{r}$ follows immediately.
Thus $\bc$ and $r$ are identical to the center and radius
of the circle represented in \eqref{eq:sq} in the case of $\til{c} = 0.$

\subsubsection{The global nonparametric quadrilateral nonconforming elements}
Turn to the physical domain $K$. It is straightforward to define the finite
elements from $\til{K}$ to $K$ by using the affine map $\cA_K$ which enables
\addcor{the transformed elements} to retain the {\it mean value property} and unisolvency.
A class of {\it nonparametric nonconforming
elements on quadrilaterals $K$} with four degrees of freedom as follows.
\begin{enumerate}
\item $K = \cF_K(\hat K);$
\item $\NC_K=P_{K}(\til{c}) = \Span\{1, x_1, x_2, \mu(x_1,x_2;\til{c}) \}$;
\item $\Sigma_{K} = \{\text{four edge-midpoint values of }
  K \} = \{ \text{four mean values over edges of }
  K \},$
\end{enumerate}
where $\mu(x_1,x_2;\til{c})$ is a quartic polynomial defined by
$\mu(x_1,x_2;\til{c})=\til{\mu}\circ \cA_K^{-1}(x_1,x_2;\til{c})
=-\frac{5}{3}\ell_1(x_1,x_2)\ell_2(x_1,x_2) q(x_1,x_2;\til{c})$, with
\begin{eqnarray*}
\ell_1(\bx)=\til{\ell}_1 \circ \cA_K^{-1}(\bx),\quad
\ell_2(\bx)=\til{\ell}_2 \circ \cA_K^{-1}(\bx),\quad
q(\bx;\til{c})=\til{\sq} \circ \cA_K^{-1}(\bx).
\end{eqnarray*}

Notice that $\mu(\bx;\til{c})$ can be interpreted as a product of
two linear polynomials and one quadratic polynomial such that the straight lines
$\ell_1(\bx)=0$ and  $\ell_2(\bx)=0$ are passing through $\bv_1$, $\bv_3$ and
$\bv_2$, $\bv_4$, respectively and $q(\bx;\til{c})=0$
is an ellipse which is determined to satisfy the {\it mean value properties} for
$\til{\mu}(\tbx)$.

We now define the global nonparametric DSSY element spaces as follows
\begin{equation*}
\begin{split}
&\NC^{np}_{h}=\{ v_h \in L^2(\O)  ~|~ v_h|_K \in \NC_K ~\mathrm{for}~ K \in \Tau_h, v_h ~\mathrm{is}~ \mathrm{continuous}~ \mathrm{at}~ \mathrm{the}~ \mathrm{midpoint}~ \mathrm{of}~ \mathrm{each}~ e \in \cE_h  \},   \\
&\NC^{np}_{h,0}=\{v_h \in \NC^{np}_{h} ~|~v_h~\mathrm{is}~ \mathrm{zero}~ \mathrm{at}~ \mathrm{the}~ \mathrm{midpoint}~ \mathrm{of}~ \mathrm{each}~ e \in \cE_h \cap \partial \O \}.
\end{split}
\end{equation*}

\begin{remark}
Since these new finite element spaces have the orthogonal property as in
\cite{dssy-nc-ell}, clearly the optimal convergence order is guaranteed for
solving second-order elliptic problems.
\cor{Indeed, \eqref{eq:mvp} implies the pass of a patch test against constant functions on
each interior edge (see (2.7a) and (2.7b) of \cite{dssy-nc-ell}), which in turn implies the following bound of the
consistent error term in the second Strang lemma:
\[
\sup_{w_h\in\NC^{np}_{h,0} } \frac{\left|a_h(u,w_h)- (f,w_h) \right|
}{\|w_h\|_{1,h}} \le C \|u\|_2 h,
\]
where $u\in H^2(\O)\cap H_0^1(\O)$ is a solution to $a(u,v) = (f,v)~ \forall v
\in H^1_0(\O),$
and $a(\cdot,\cdot)$ and $a_h(\cdot,\cdot)$ are bounded, coercive bilinear
forms
on  $H^1_0(\O)$ and $\NC^{np}_{h,0}$, respectively.}
\end{remark}

\begin{remark}
The new nonparametric DSSY elements will be used as a stable family of mixed finite elements for
the velocity fields, combined
with the piecewise constant element for pressure, in solving the Navier-Stokes
equations \cite{cdy99, jeon-nam-sheen-shim-opt, rann}. The nonconforming nature \addcor{enables us} to solve elasticity problems without
numerical locking, either \cite{lls-nc-elast, zhiminzhang}. \cor{See the numerical
experiments in Subsections 3.2 and 3.3.}
\end{remark}

\begin{remark}
In practice, the choice $\til{c} = 0$ is recommended since it minimizes
the number of computations in applying quadrature rules.
\end{remark}

\begin{remark}
\addcor{One may construct basis functions in a sixth-degree polynomial
space other than the quartic polynomial as in \eqref{DSSY-psi}
following the same idea. However, using a
higher-degree polynomial space requires a higher accuracy quadrature rule in
the construction of the stiffness matrix. 
In this sense, the quartic polynomial space seems to be a reasonable choice in view of implementation issues.}
\end{remark}

\section{Numerical results}
\cor{\subsection{The elliptic problem}}
In this section we perform numerical experiments for a simple elliptic
problem:
\begin{eqnarray*}
-\Delta u  &=& f \qquad \mathrm{in}~ \O, \\
u &=& 0 \qquad \mathrm{on}~  \partial \O,
\end{eqnarray*}
on the domain  $\O=(0,1)^2$.
The source function $f$ is given so that the exact solution is
\begin{equation*}
u(\bx)=\sin\pi x_1\sin\pi x_2. 
\end{equation*}

We consider two kinds of elements:
the parametric DSSY element  $\NC^{p}_{h,0}$, and
the nonparametric DSSY elements $\NC^{np}_{h,0}$ with $\til{c}=0$ and
$\til{c}=1.$
Also two types of quadrilateral meshes were employed:
uniformly $\theta$-dependent quadrilateral meshes as shown in
\figref{fig:domain} are used and the  randomly perturbed quadrilateral meshes
depicted in \figref{fig:nonuniform}.
The uniformly $\theta$-dependent quadrilaterals become
rectangles if $\theta=0$, while they degenerate into triangles if
$\theta=1.$
\begin{figure}[ht]
 \centering
\includegraphics[width=0.60\textwidth]{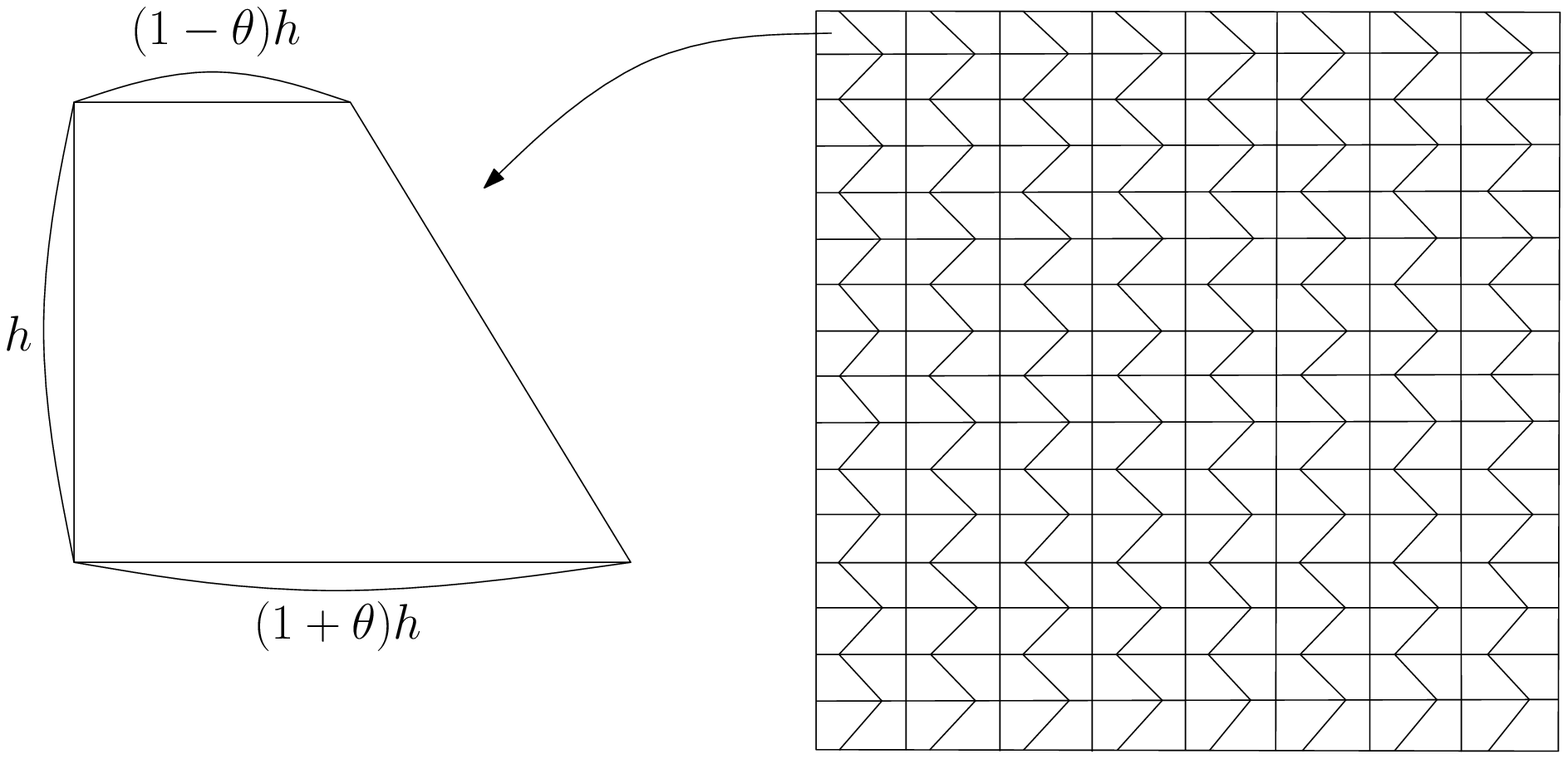}
\caption{A uniform trapezoidal triangulation with a trapezoidal with parameter $0 \leq \theta < 1$.}
\label{fig:domain}
\end{figure}

\begin{figure}[ht]
 \centering
\includegraphics[width=0.60\textwidth]{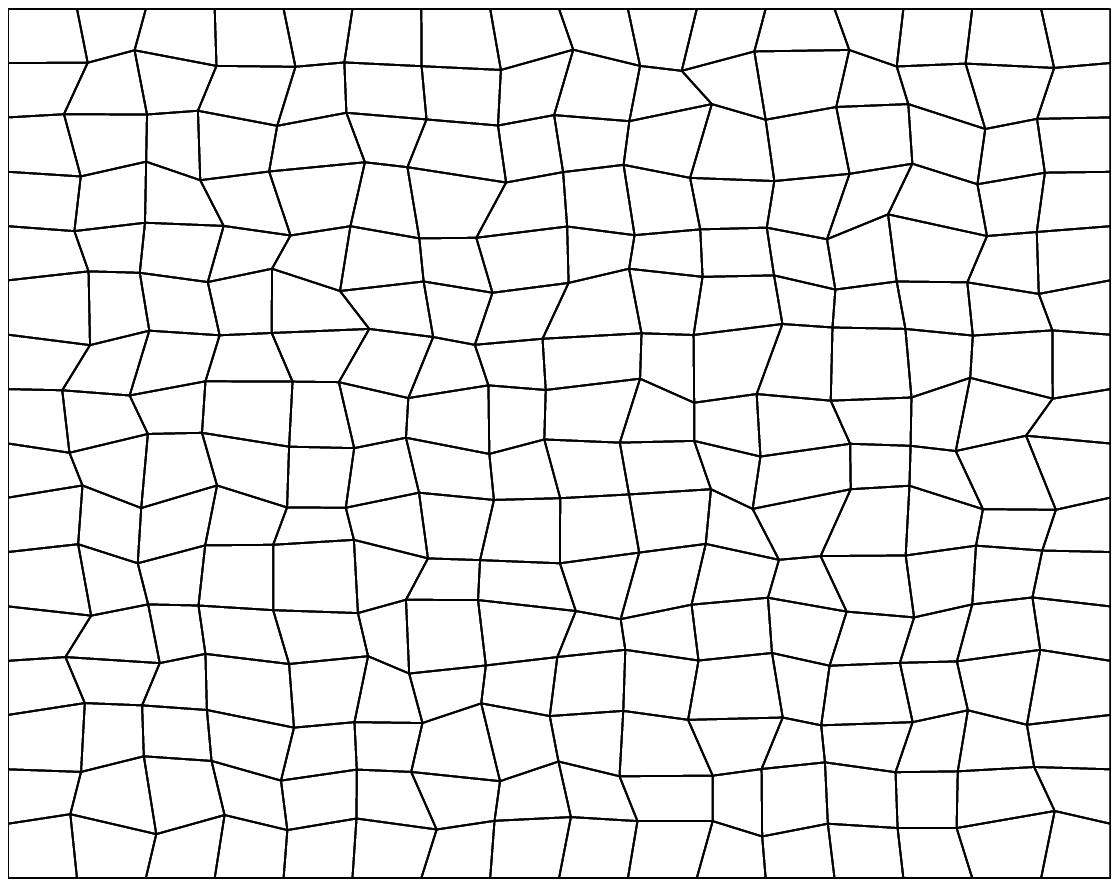}
\caption{A nonuniform randomly perturbed quadrilateral triangulation}
\label{fig:nonuniform}
\end{figure}

The tables containing numerical results are organized as follows:
the parametric nonconforming elements in \tabrefs{table:ell1}{table:ell4},
the nonparametric nonconforming elements with $\til{c} = 0$ in \tabrefs{table:ell2}{table:ell5},
and those with $\til{c} = 1$ in \tabrefs{table:ell3}{table:ell6};
the uniformly $\theta$-dependent trapezoidal meshes in \tabrefss{table:ell1}{table:ell3}
and the nonuniform quadrilateral meshes in \tabrefss{table:ell4}{table:ell6}.

We tested several different $\theta$'s, but the convergence behaviors were
quite similar and thus we report only the case of $\theta=0.7$.

Numerical experiments were
performed with increasing values of $\til{c}$ such as 10, 100, 1000, and so
on. The larger $\til{c}$ values are chosen, the slower convergence is observed.
We only present numerics for the two nonparametric elements with $\til{c}=0$
and 1. At this point we recommend readers to use $\til{c} = 0$ for its
simplicity.

As observed in the uniform mesh the convergence order is optimal for both
elements and the values of numerical solutions are almost identical.  \cor{In order
to compare cost efficiency in a fair fashion, we computed nonparametric basis functions
for each quadrilateral
and applied the static condensation to circumvent bubble
functions for parametric element also for each quadrilateral.
From \tabref{table:ell7} we observe that
when the mesh size $h$ is larger than 1/100, the nonparametric element
is cheaper to use; however, the computing time ratios approach to 1
(still the use of nonparametric element seems to be cheaper),
as the mesh size tends to decrease.}
\addcor{These phenomena are perhaps due to the fact that the additional cost in static
condensation for the parametric elements takes a less portion in the total
computing time as the mesh size decreases.}

\begin{table}[ht]
  \begin{center}
\begin{tabular}{||c||c|c|c|c|c|}\hline
h & DOF & $||u-u_h||_{0,\O}$ & ratio  &$||u-u_h||_{1,h}$  & ratio  \\ \hline
1/4   & 40     &  0.5284E-01 & -  &0.8532  &-           \\ \hline
1/8   & 176    & 0.1556E-01  &1.76  &0.4458 & 0.94  \\ \hline
1/16  & 736    &0.4184E-02  &1.89  &0.2274 &0.97    \\ \hline
1/32  & 3008   & 0.1096E-02 &1.93  &0.1147 &0.99     \\ \hline
1/64  & 12160  & 0.2810E-03  &1.96  &0.5756E-01 &0.99    \\ \hline
1/128  & 48896 &0.7117E-04  &1.98  & 0.2883E-01  &1.00   \\ \hline
1/256  &196096 &0.1791E-04  &1.99  & 0.1443E-01 &1.00      \\ \hline
\end{tabular}
  \end{center}
 \caption{\label{table:ell1} Computational results for $\NC^{p}_{h,0}$ with $\theta=0.7$ for the elliptic problem. }
\end{table}

\begin{table}[ht]
  \begin{center}
\begin{tabular}{||c||c|c|c|c|c|}\hline
h & DOF & $||u-u_h||_{0,\O}$ & ratio &$||u-u_h||_{1,h}$ & ratio  \\ \hline
1/4   &24 &0.5437E-01 &- &0.8221 &-  \\ \hline
1/8   &112 &0.1568E-01 &1.79 &0.4302 & 0.93   \\ \hline
1/16  &480 &0.4145E-02 &1.92 &0.2213 &0.96   \\ \hline
1/32   &1984 & 0.1084E-02 &1.93 &0.1124  &0.98    \\ \hline
1/64   & 8064 &0.2788E-03 &1.96 &0.5659E-01 &0.99   \\ \hline
1/128   & 32512 &0.7077E-04 &1.98 & 0.2839E-01 &1.00  \\ \hline
1/256  &130560    &0.1783E-04  &1.99  & 0.1422E-01 &  1.00    \\ \hline
\end{tabular}
  \end{center}
 \caption{\label{table:ell2} Computational results for $\NC^{np}_{h,0}$ with $\theta=0.7$ and $\til{c}=0$ for the elliptic problem}
\end{table}

\begin{table}[ht]
  \begin{center}
\begin{tabular}{||c||c|c|c|c|c|}\hline
h & DOF & $||u-u_h||_{0,\O}$ & ratio &$||u-u_h||_{1,h}$ & ratio  \\ \hline
1/4   &24 &0.5840E-01 &- &0.8486 &-  \\ \hline
1/8   &112 &0.1655E-01 &1.82 &0.4452 & 0.93   \\ \hline
1/16  &480 &0.4229E-02 &1.97 &0.2261 &0.98   \\ \hline
1/32   &1984 &0.1102E-02 &1.94 &0.1145 &0.98    \\ \hline
1/64   & 8064 &0.2836E-03 &1.96 &0.5760E-01 &0.99   \\ \hline
1/128   & 32512 &0.7212E-04 &1.98 &0.2887E-01 &1.00  \\ \hline
1/256  &130560    &0.1819E-04 &1.99  &0.1446E-01 &  1.00    \\ \hline
\end{tabular}
  \end{center}
 \caption{\label{table:ell3} Computational results for $\NC^{np}_{h,0}$ with $\theta=0.7$ and $\til{c}=1$ for the elliptic problem. }
\end{table}


\begin{table}[ht]
  \begin{center}
\begin{tabular}{||c||c|c|c|c|c|}\hline
h & DOF & $||u-u_h||_{0,\O}$ & ratio  &$||u-u_h||_{1,h}$  & ratio \\ \hline
1/4   & 40     &0.3490E-01&- &0.7183 &-   \\ \hline
1/8   & 176    &0.8663E-02 &2.01 &0.3657  & 0.97 \\ \hline
1/16  & 736    &0.2287E-02 &1.92 &0.1871  &0.97  \\ \hline
1/32  & 3008   &0.5835E-03 &1.97  & 0.9387E-01 &0.99  \\ \hline
1/64  & 12160  &0.1481E-03 &1.98 &0.4721E-01 & 0.99 \\ \hline
1/128  & 48896 &0.3729E-04 & 1.99 &0.2363E-01 &1.00 \\ \hline
1/256  &196096 &0.9350E-05 &2.00 &0.1183E-01 &1.00   \\ \hline
\end{tabular}
  \end{center}
 \caption{\label{table:ell4} Computational results for $\NC^{p}_{h,0}$ on the
nonuniform randomly perturbed meshes for the elliptic problem.}
\end{table}

\begin{table}[ht]
  \begin{center}
\begin{tabular}{||c||c|c|c|c|c|}\hline
h & DOF & $||u-u_h||_{0,\O}$ & ratio &$||u-u_h||_{1,h}$ & ratio  \\ \hline
1/4   &24 & 0.3594E-01 &- &0.7363  &- \\ \hline
1/8   &112 &0.8760E-02 &2.04 & 0.3682  &1.00   \\ \hline
1/16  &480  &0.2290E-02 &1.94  &0.1873 &0.98  \\ \hline
1/32   &1984 &0.5834E-03  &1.97 &0.9386E-01 &1.00    \\ \hline
1/64   & 8064 &0.1479E-03  &1.98 &0.4718E-01 &0.99  \\ \hline
1/128   & 32512 &0.3725E-04  &1.99 &0.2362E-01 &1.00  \\ \hline
1/256  &130560    &0.9341E-05   &2.00 &0.1182E-01 &1.00     \\ \hline
\end{tabular}
  \end{center}
 \caption{\label{table:ell5} Computational results for $\NC^{np}_{h,0}$ on the
  nonuiform randomly perturbed meshes when $\til{c}=0$ for the elliptic problem. }
\end{table}

\begin{table}[ht]
  \begin{center}
\begin{tabular}{||c||c|c|c|c|c|}\hline
h & DOF & $||u-u_h||_{0,\O}$ & ratio &$||u-u_h||_{1,h}$ & ratio  \\ \hline
1/4   &24 & 0.3598E-01 &- & 0.7370  &- \\ \hline
1/8   &112 &0.8752E-02 &2.04 & 0.3684  &1.00   \\ \hline
1/16  &480  &0.2290E-02 &1.93  &0.1874  &0.98  \\ \hline
1/32   &1984 & 0.5842E-03  &1.97 &0.9398E-01 &1.00    \\ \hline
1/64   & 8064 &0.1481E-03 &1.98 &0.4725E-01 &0.99  \\ \hline
1/128   & 32512 &0.3730E-04  &1.99 &0.2365E-01 &1.00  \\ \hline
1/256  &130560    &0.9353E-05   &2.00 &0.1184E-01 &1.00     \\ \hline
\end{tabular}
  \end{center}
 \caption{\label{table:ell6} Computational results for $\NC^{np}_{h,0}$ on the
   nonuniform randomly perturbed meshes when $\til{c}=1$ for the elliptic problem. }
\end{table}

\begin{table}[ht]
  \begin{center}
\begin{tabular}{||c||c|c|c|c|c|c|}\hline
h   & $\theta=0.3 $ & $\theta=0.5 $ & $\theta=0.7 $ & Random mesh  \\ \hline
{1/8}  & {0.6764}  & {0.6764}   & {0.6666} & {0.6571} \\ \hline
{1/16} & {0.6711}  & {0.6621}   & {0.6802} & {0.6712}  \\ \hline
{1/32} & {0.6796}  & {0.6761}   & {0.6844} & {0.7022}  \\ \hline
{1/64} & {0.7333}  & {0.7285}   & {0.7303} & {0.7344}  \\ \hline
{1/128}& {0.7611}  & {0.7656}   & {0.7540} & {0.7275}  \\ \hline
{1/256}& {0.8136}  & {0.8296}   & {0.7924} & {0.7875}  \\ \hline
{1/512}& {0.9431}  & {0.9170}   & {0.8861} & {0.8415}   \\ \hline
\end{tabular}
  \end{center}
 \caption{\label{table:ell7} Ratio of computing time
   t($\NC^{np}_{h,0}$)/t$(\NC^{p}_{h,0})$ for the elliptic problem
on uniform trapezoidal meshes with varying parameter $\theta$ and on
nonuniform randomly perturbed meshes. }
\end{table}

\subsection{The incompressible Stokes equations}
In this subsection, we apply $\mathcal{NC}^{np}_{h,0}$ to approximate each
component of the velocity fields in solving the
incompressible Stokes equations in two dimensions, while the piecewise
constant element is employed to approximate the pressure.

Set $\O=(0,1)^2$ and consider the following Stokes equations:
\begin{equation*}
\begin{split}
-\Delta \mathbf{u} + \nabla p &= \mathbf{f} \qquad \mathrm{in}~ \O, \\
\nabla \cdot \mathbf{u} &= 0 \qquad \mathrm{in} ~\O, \\
\mathbf{u} &= 0 \qquad \mathrm{on}~  \partial \O,
\end{split}
\end{equation*}
where the force term $\mathbf f$ is generated by the following exact solution
\begin{equation*}
\begin{split}
\mathbf{u}(x_1,x_2)&= \left( \begin{array}{c} e^{x_1+2x_2}(x_1^4-2x_1^3+x_1^2)(2x_2^4-4x_2^2+2x_2)  \\    -e^{x_1+2x_2}(x_1^4+2x_1^3-5x_1^2+2x_1)(x_2^4-2x_2^3+x_2^2)
 \end{array} \right),\\
p(x_1,x_2)&=-\sin2\pi x_1\sin2\pi x_2.
\end{split}
\end{equation*}

\tabref{table:St1} shows the numerical results on uniform trapezoidal meshes with $\theta=0.7$ and $\til{c} = 0.$
Similarly, \tabref{table:St3} presents the results on the perturbed
nonuniform meshes with $\til{c} = 0.$
From these numerical results, we observe the optimal
convergence rates of $O(h^2)$ and $O(h)$ for the velocity and pressure in
$L^2$ norm, respectively. The numerical solutions in the case with $\til c\neq
0$ behave similarly, whose tables are omitted to report.

\begin{table}[ht]
  \begin{center}
\begin{tabular}{||c||c|c|c|c|c|}\hline
{h} & {DOF} & {$||\mathbf{u}-\mathbf{u}_h||_{0,\O}$} &{ ratio} &{$||p-p_h||_{0,\O}$} & {ratio}  \\ \hline
{1/4}   &{63} & {0.1302E-01} &{-} & {0.2770} &{-}  \\ \hline
{1/8}   &{287} & {0.5424E-02}  &{1.26} & {0.1898}  &{0.55}    \\ \hline
{1/16}  &{1215} & {0.1631E-02} &{1.73} &{0.9571E-01} &{0.99}   \\ \hline
{1/32}   &{4991} &{0.4396E-03}  &{1.89} &{0.4801E-01} &{1.00}  \\ \hline
{1/64}   & {20223} &{0.1130E-03}  &{1.96} &{0.2410E-01}  &{0.99}   \\ \hline
{1/128}   & {81407} & {0.2855E-04}  &{1.98} & {0.1208E-01} &{1.00}  \\ \hline
\end{tabular}
  \end{center}
 \caption{\label{table:St1} Computational results for $\NC^{np}_{h,0}$ with $\theta=0.7$ and $\til{c}=0$ for the Stokes problem. }
\end{table}

\begin{table}[ht]
  \begin{center}
\begin{tabular}{||c||c|c|c|c|c|}\hline
{h} & {DOF} & {$||\mathbf{u}-\mathbf{u}_h||_{0,\O}$} &{ ratio} &{$||p-p_h||_{0,\O}$} & {ratio}  \\ \hline
{1/4}   &{63} & {0.1205E-01} &{-} &{0.2960} &{-}  \\ \hline
{1/8}   &{287} & {0.3474E-02}  &{1.79} &{0.1635}  &{0.85}        \\ \hline
{1/16}  &{1215} &  {0.9061E-03} &{1.94} & {0.8381E-01} &{0.96}   \\ \hline
{1/32}   &{4991} &{0.2332E-03} &{1.96}  &{0.4216E-01} &{0.99}  \\ \hline
{1/64}   & {20223} &  {0.5801E-04} &{2.00}  &{0.2103E-01} &{ 1.00}  \\ \hline
{1/128}   & {81407} &{0.1455E-04}   &{2.00}  &{0.1052E-01}  &{ 1.00}  \\ \hline
\end{tabular}
  \end{center}
 \caption{\label{table:St3} Computational results for $\NC^{np}_{h,0}$ on the
   perturbed nonuniform mesh when $\til{c}=0$ for the Stokes problem. }
\end{table}

\subsection{The planar linear elasticity problem}
In this subsection,  the nonparametric element $\mathcal{NC}^{np}_{h,0}$
is applied to approximate each component of the displacement fields
for the planar linear elasticity problem with the clamped boundary condition.

Set $\O=(0,1)^2.$
For $(\mu,\lambda) \in [\mu_0,\mu_1]\times [\lambda_1,\infty),$ consider the
  following elasticity equations with homogeneous boundary condition:
\begin{equation*}
\begin{split}
-(\lambda+\mu)\nabla(\nabla \cdot \mathbf{u}) -\mu \Delta \mathbf{u} &=\mathbf{f}\qquad \mathrm{in}~ \O, \\
\mathbf{u} &= 0 \qquad \mathrm{on}~  \partial \O,
\end{split}
\end{equation*}
where the external force term $\mathbf f$ is generated by the following
exact solution
\begin{equation*}
\begin{split}
u_1(x_1,x_2)&= \sin2\pi x_2(-1+\cos2\pi x_1) +\frac{1}{1+\lambda}\sin\pi x_1 \sin \pi x_2, \\
u_2(x_1,x_2)&= -\sin2\pi x_1(-1+\cos2\pi x_2) +\frac{1}{1+\lambda}\sin\pi x_1 \sin \pi x_2.
 \end{split}
\end{equation*}

In order to check numerical locking phenomena, the Lam\'e parameters are
chosen such that  $(\mu,\lambda)=(1,1)$ and $(1,10^5)$. The
numerical results are presented in \tabrefs{table:ela1}{table:ela2} for both
cases on uniform trapezoidal meshes with $\theta=0.7$ and $\til{c} = 0.$
Similar results are given in \tabrefs{table:ela3}{table:ela4} for both cases
on the randomly perturbed meshes with $\til{c} = 0.$
One can easily observe from the numerical results that the nonparametric element
$\mathcal{NC}^{np}_{h,0}$ can be used to solve
planar elasticity problems with the clamped boundary condition optimally without
numerical locking.

\begin{table}[ht]
  \begin{center}
\begin{tabular}{||c||c|c|c|c|c|}\hline
{h} & {DOF} & {$||\mathbf{u}-\mathbf{u}_h||_{0,\O}$} & {ratio} &{$||\mathbf{u}-\mathbf{u}_h||_{1,h}$} & {ratio}  \\ \hline
{1/4}   &{48} &{0.3787}  &{-} &{5.640} &{-}  \\ \hline
{1/8}   &{224} &{0.1074} &{1.81} &{2.941} &{ 0.93}    \\ \hline
{1/16}  & {960} &{0.2911E-01} &{1.88} &{1.524} &{ 0.94}  \\ \hline
{1/32}   &{3968} &{0.7521E-02} & {1.95}&{0.7712} &{0.98}    \\ \hline
{1/64}   &{16128} &{0.1906E-02} & {1.98}&{0.3870}  &{ 0.99}  \\ \hline
{1/128}   &{65024} &{0.4793E-03} &{1.99} &{  0.1937}&{1.00}  \\ \hline
\end{tabular}
  \end{center}
 \caption{{ \label{table:ela1} Computational results for $\NC^{np}_{h,0}$ with $\theta=0.7$, $\til{c}=0$, $\mu=1$, and $\lambda=1$   for the elasticity problem.} }
\end{table}

\begin{table}[ht]
  \begin{center}
\begin{tabular}{||c||c|c|c|c|c|}\hline
{h} & {DOF} & {$||\mathbf{u}-\mathbf{u}_h||_{0,\O}$} & {ratio} &{$||\mathbf{u}-\mathbf{u}_h||_{1,h}$} & {ratio}  \\ \hline
{1/4}   &{48} &{0.3781} &{-} &{5.631} &{-}  \\ \hline
{1/8}   &{224} &{ 0.1075} &{1.81} &{2.918}  &{0.95}    \\ \hline
{1/16}  & {960} &{ 0.2900E-01} &{1.89} &{1.511} &{0.95}   \\ \hline
{1/32}   &{3968} &{0.7495E-02} &{1.95} &{0.7642} &{0.98}    \\ \hline
{1/64}   &{16128} &{0.1902E-02} &{1.98} &{0.3834} & {0.99}   \\ \hline
{1/128}   &{65024} & {0.4789E-03} &{1.99} &{0.1919} & {1.00} \\ \hline
\end{tabular}
  \end{center}
 \caption{{\label{table:ela2} Computational results for $\NC^{np}_{h,0}$ with $\theta=0.7$, $\til{c}=0$, $\mu=1$, and $\lambda=10^5$   for the elasticity problem. }}
\end{table}

\begin{table}[ht]
  \begin{center}
\begin{tabular}{||c||c|c|c|c|c|}\hline
{h} & {DOF} & {$||\mathbf{u}-\mathbf{u}_h||_{0,\O}$} & {ratio} &{$||\mathbf{u}-\mathbf{u}_h||_{1,h}$} & {ratio}  \\ \hline
{1/4}   &{48} &{0.2517} &{-} & {4.679} &{-}  \\ \hline
{1/8}   &{224} &{0.6530E-01} &{1.77} &{2.459} &{0.73}   \\ \hline
{1/16}  &{ 960} &{0.1724E-01} &{1.92} & {1.264} &{0.96}   \\ \hline
{1/32}   &{3968} &{ 0.4392E-02}& { 1.97}& {0.6382}&{ 0.99}   \\ \hline
{1/64}   &{16128} & {0.1105E-02}&{ 1.99} &{0.3197}  & {1.00}  \\ \hline
{1/128}   &{65024} &{ 0.2776E-03} &{1.99} & {0.1601}  & {1.00} \\ \hline
\end{tabular}
  \end{center}
 \caption{{\label{table:ela3} Computational results for $\NC^{np}_{h,0}$  on the perturbed nonuniform mesh with $\til{c}=0$, $\mu=1$, and $\lambda=1$   for the elasticity problem. }}
\end{table}

\begin{table}[ht]
  \begin{center}
\begin{tabular}{||c||c|c|c|c|c|}\hline
{h} & {DOF} & {$||\mathbf{u}-\mathbf{u}_h||_{0,\O}$} & {ratio} &{$||\mathbf{u}-\mathbf{u}_h||_{1,h}$} & {ratio}  \\ \hline
{1/4}   &{48} &{ 0.2523} &{-} & {4.659} &{-}  \\ \hline
{1/8}   &{224} &{0.6591E-01} &{1.93} &{2.444} &{0.93}    \\ \hline
{1/16}  &{ 960} & {0.1746E-01} &{1.92} &{1.255} &{0.96}   \\ \hline
{1/32}   &{3968} &{0.4461E-02} &{1.97} &{0.6334} &{0.99}   \\ \hline
{1/64}   &{16128} & {0.1124E-02} &{1.99} &{0.3174}  & {1.00}  \\ \hline
{1/128}   &{65024} &{0.2825E-03} &{1.99} &{0.1589} & {1.00} \\ \hline
\end{tabular}
  \end{center}
 \caption{{\label{table:ela4} Computational results for $\NC^{np}_{h,0}$ on the
   perturbed nonuniform mesh with $\til{c}=0$, $\mu=1$, and $\lambda=10^5$   for the elasticity problem.} }
\end{table}

\section*{Acknowledgments}
The research of YJ is supported by NRF of Korea (No. 2010-0021683). This research was supported by NRF of Korea(No. 2012-0000153).


\bibliographystyle{abbrv}


\end{document}